\newtheorem{theorem}{Theorem}
\newtheorem{lemma}{Lemma}
\newtheorem{proper}{Property}
\newtheorem{cond}{Condition}
\newtheorem{remark}{Remark}
\newcommand\T{\rule{0pt}{2.6ex}}       % Top strut
\newcommand\B{\rule[-1.2ex]{0pt}{0pt}} % Bottom strut
\def\-{\raisebox{.85pt}{-}}
\def\+{\raisebox{.85pt}{+}}
\def\={\raisebox{.85pt}{=}}
\title{\LARGE \bf
An Adaptive Observer Design for Charge-State and Crossover Estimation in Disproportionation Redox Flow Batteries undergoing Self-Discharge
}
\author{Pedro Ascencio, Kirk Smith, Charles W. Monroe$^*$, and David Howey$^*$% <-this % stops a space
\thanks{Pedro Ascencio ({\tt pedro.ascencio@eng.ox.ac.uk}), Kirk Smith ({\tt kirk.smith@eng.ox.ac.uk}), Prof. Charles W. Monroe  ({\tt charles.monroe@eng.ox.ac.uk}) and Prof.  David Howey ({\tt david.howey@eng.ox.ac.uk}) are with the Department of Engineering Science, University of Oxford, Oxford OX1 3PJ, United Kingdom. $^*$Joint corresponding authors.}%
}
\begin{document}

\maketitle
\thispagestyle{empty}
\pagestyle{empty}

%%%%%%%%%%%%%%%%%%%%%%%%%%%%%%%%%%%%%%%%%%%%%%%%%%%%%%%%%%%%%%%%%%%%%%%%%%%%%%%%
\begin{abstract}
This article considers a model formulation and an adaptive observer design for the simultaneous estimation of the state of charge and crossover flux in disproportionation redox flow batteries. This novel nonaqueous battery chemistry allows a simple isothermal lumped parameter model to be formulated. The transport of vanadium through the porous separator is a key unknown function of battery variables and it is approximated in the space of continuous functions. The state and parameter observer adaptation laws are derived using Lyapunov analysis applied to the estimation error, the stability and convergence of which are proved. Numerical values of observer gains are calculated by solving a polytopic linear matrix inequality and equality problem via convex optimization. The performance of this design is evaluated on a laboratory flow battery prototype, and it is shown that the crossover flux can be considered a linear function of state of charge for this battery configuration during self-discharge.
\end{abstract}

%%%%%%%%%%%%%%%%%%%%%%%%%%%%%%%%%%%%%%%%%%%%%%%%%%%%%%%%%%%%%%%%%%%%%%%%%%%%%%%%
\section{Introduction}
Although redox flow batteries (RFBs) have become a promising alternative for grid-scale energy storage, many fundamental challenges have to be addressed to make them competitive. A persistent issue is the crossover of active species through the separator, which leads to electrolyte imbalance and capacity fade \cite{Leung-2012,GMPS-2016,Potash-2016,Xianfeng-2011}.

To improve the design and management of RFBs, several multi-physics models have been developed, in particular for all-vanadium chemistries \cite{Weber-2011,Xu-2015,Zheng-2014}. The pioneering studies commonly assume perfect membrane selectivity. More realistic distributed-parameter models usually include, in both porous electrodes and across the separator, species, charge, and momentum conservation, mass transport by diffusion, convection and migration, ohmic losses, and interfacial reaction kinetics \cite{Shah-2010, Chen-2014,Knehr-2012,Wang-2014}. Alternatively, simpler and computationally-tractable lumped parameter approaches have been proposed, accounting for ion transport across the membrane through standard pseudo-steady transport assumptions, which establish species fluxes proportional to their concentration differences \cite{Tang-2011,Shah-2011,Kazacos-2012,Boettcher-2016}.

All of these models involve parameters that are commonly assumed known, by means of standard experiments or model fits (e.g.\ \cite{Gandomi-2018}). In particular, the separator crossover flux, typically taken to be governed by Fickian diffusion \cite{Schmal-1986,Wiedemann-1998, Sun-2010,Kamcev-2017}, is analyzed to quantify side reactions that consume the active species and consequently cause capacity fade \cite{Won-2015,Knehr-2012}. These models do not fully address the more complex transport behaviour of the membrane, for example caused by solute/solute interactions or microstructural effects \cite{Luo-2018,Shinkle-2012}. The resulting unmodeled dynamics, in addition to the assumption of invariance of the model parameters with respect to battery internal states, leads to erroneous long-term predictions of performance.

To account for the physico-chemical properties of the battery and their changes, model parameters are typically obtained either \textit{via} experimental methods, or least squares regression approaches (see discussion in \cite{Lee-2018,Vynnycky-2012})\footnote{There may be identifiability challenges in accordance with the type/structure of model selected and excitation of the battery \cite{Bizeray-2018, Park-2018}.}. Due to their simple practical implementation, real-time observer-based schemes for lumped parameter models of RFBs have also received attention \cite{Mohamed-2013,Xiong-2014, Wei-2016, Xiong-2017, Wei-2018}. These on-line identification methods are robust against uncertain initial conditions and can perform predictions of the main states of the battery based on a fixed set of parameters, or in some cases simultaneously provide their continuous estimation. In general, similar to the widespread on-line estimation methods for lithium-ion batteries \cite{Plett-2015}, these approaches use electrical equivalent circuit models (ECMs) to emulate the RFB behaviour, and commonly perform the state/parameter estimation via the extended Kalman filter (EKF) \cite{Mohamed-2013,Yu-2014,Xiong-2014,Wei-2018}, amongst other model-based observer schemes \cite{Wei-2016,Xiong-2017}. However, due to limitations in the formulation of the model, or convergence issues of the EKF, the crossover flux is not explicitly included in this type of approach.
%Instead, its consequences (mainly capacity loss, the observer-based on-line schemes of which has been scarcely investigated \cite{Wei-2018}) are deduced or detected from the observer predicted concentrations and parameter evolution.

This paper therefore addresses the challenge of simultaneous estimation of battery states and crossover flux for a novel type of nonaqueous RFB based on vanadium acetylacetonate disproportionation \cite{Liu-2009,SM-2019}. This underlying chemistry allows a particularly simple isothermal lumped-parameter model to be employed for a disproportionation redox flow battery (DRFB). The model considers the state of charge in one half-cell and its associated electrolyte reservoir, alongside the crossover flux out of the half-cell through the separator. A general mathematical structure is used to express crossover flux, avoiding any assumptions particular to a given transport mechanism through the membrane.

To perform an on-line simultaneous estimation of the battery unknown states and parameters, an adaptive observer is designed via the standard Lyapunov second method of stability. From the analysis of the stability and convergence of the observer estimation error, a coupled Linear Matrix Inequalities and Equality (LMI-LME) problem is derived, which is numerically solved via convex optimization, using a polytopic approach.

% This paper is organized as follows. In Section \ref{model} a simple isothermal lumped parameter model for a type of vanadium acetylacetonate RFB is formulated. In Section \ref{Sobs} the problem of adaptive observer design is analysed. In Section \ref{lab} a laboratory RFB prototype setup is described and in Section \ref{results} the performance of the proposed approach is illustrated. Concluding remarks and further steps of research are included in Section \ref{conclu}.

\vspace{-0.025cm}

\subsection{Notation}
The space of all continuous functions on the domain $\Omega$ into $\mathbb{R}$ is denoted $\mathcal{C}(\Omega; \mathbb{R})$. The set of (symmetric) positive definite matrices of dimension $n \times n$ is denoted $\mathbb{S}^n_+$, and $I_n$ is the identity matrix of dimension $n \times n$. Also, $\mathbb{R}^+ \equiv [0,+\infty)$, $\dot{v}(t)\triangleq \frac{dv}{dt}(t)$, $\|x\|\triangleq \sqrt{x^{\top} x}$ with $x \in \mathbb{R}^n$ being a real vector of dimension $n$, and $\|E\| \triangleq \sqrt{\lambda_{\text{max}}(E^{\top} E)}$ with $E$ being a real matrix, where $\lambda_{\text{max}}$ denotes the largest eigenvalue; $\lambda_{\text{min}}$ stands for the smallest eigenvalue.

%---------------------------------------------------------------------%
%---------------------------------------------------------------------%
\section{Modeling DRFBs}
In a DRFB, identical liquid electrolyte solutions are stored in two separate reservoirs (see Figure \ref{fig_rfb}), which contain a single metal electroactive species that can be both oxidized and reduced. When the battery is fully discharged, the active species in both reservoirs have identical oxidation states. The vanadium acetylacetonate ($\ce{V(acac)3}$) compound supports disproportionation electrochemistry.

\begin{figure}
    \centering
    \vskip+0.1cm
    \includegraphics[width=\columnwidth]{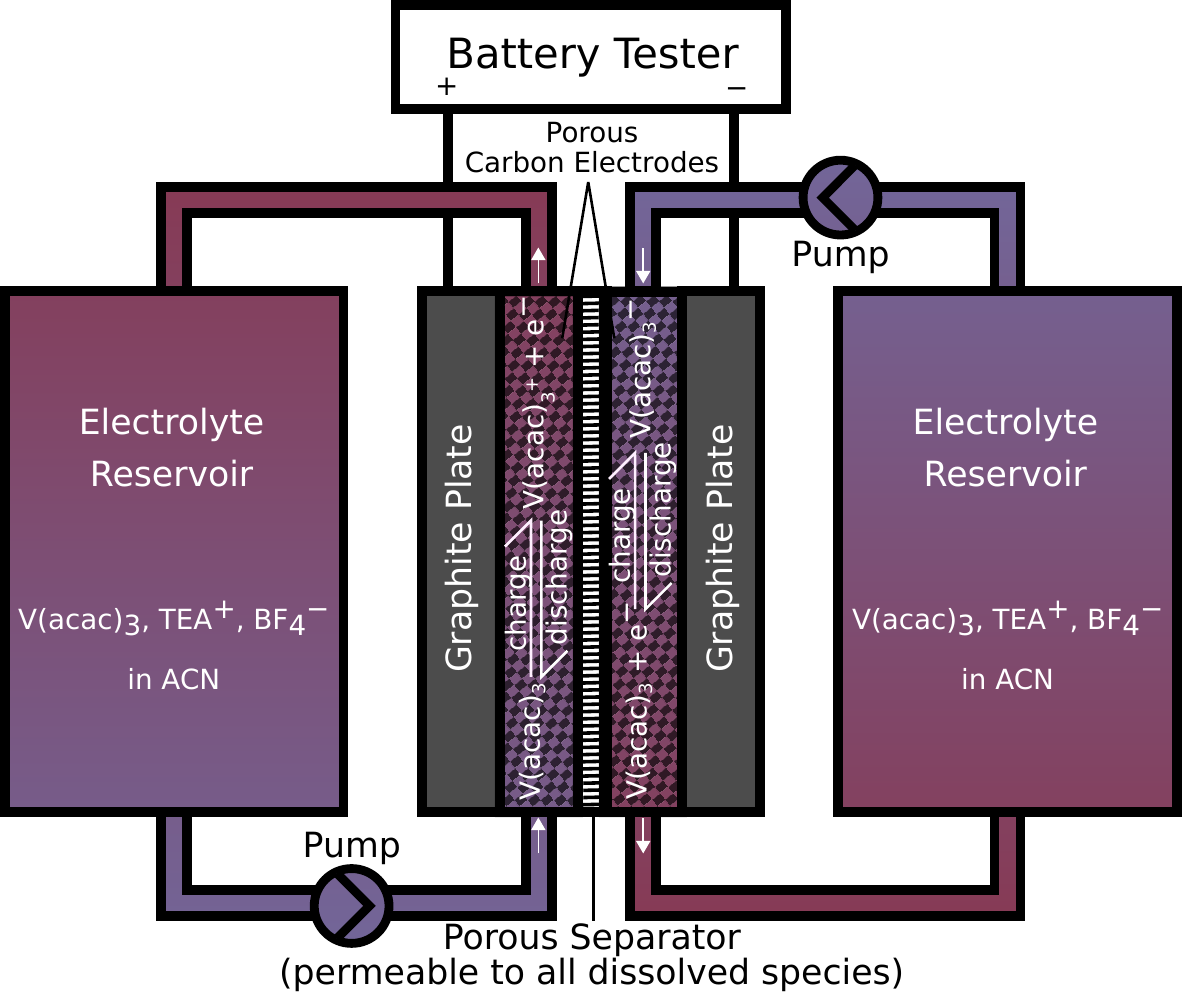}
    \vspace{-0.25cm}
    \caption{\small{Schematic of a DRFB using vanadium acetylacetonate}}
    \label{fig_rfb}
    \vskip-0.25cm
\end{figure}

A typical \ce{V(acac)3} cell differs significantly from a traditional aqueous all-vanadium RFB. To access the high redox potential associated with \ce{V(acac)3} disproportionation, one must use a nonaqueous solvent whose electrochemical stability window is wider than water. The reversible nature of the disproportionation reaction means that the battery is tolerant to crossover, which impacts coulombic efficiency but does not permanently degrade charge capacity. This in principle enables use of porous separators, rather than costly ion-exchange materials, and opens up a value tradeoff between the reactor capital cost and its coulombic efficiency \cite{Potash-2016,Liu-2009,Shinkle-2012,SM-2019,James_PhD}.

Cyclic voltammetry experiments \cite{Liu-2009} reveal two main redox couples associated with $\ce{V(acac)_3}$

\noindent
\begin{align}
\begin{split}
& \bullet \quad \text{Negative Chamber} \nonumber \\[-0.1cm]
& \ce{  V(III)(acac)_{3} + e- <=>[charge][discharge] [V(II)(acac)_{3}]^{-}}, \\[-0.1cm]
& \bullet \quad \text{Positive Chamber}  \nonumber \\[-0.1cm]
& \ce{  V(III)(acac)_{3} <=>[charge][discharge] [V(IV)(acac)_{3}]^{+} + e^{-} },
\end{split}
\end{align}

\noindent
with an equilibrium cell potential of $E^{\text{0}}=$2.18 [V] \cite{Liu-2009,Shinkle-2012}. Crossover and the resulting self-discharge of the battery is due to the comproportionation of $\ce{V(II)}$ and $\ce{V(IV)}$ to form  $\ce{V(acac)3}$ \cite{Shinkle-2012}.

\subsection{Isothermal Lumped Parameter Model}
\label{model}

Due to the symmetry inherent to disproportionation chemistry, the model analysis can be carried out by considering only one side of the battery. Let $n=n(t)$ be the amount of neutral $\ce{V(acac)_{3}}$ species remaining in one reservoir. In isothermal operation, its rate of change is due to the current\footnote{positive current is considered a discharge process} $I$ driven by the battery, and the crossover flow $Q_x$ through the separator, namely

\noindent
\begin{align}
\frac{dn}{dt}(t) &= \frac{I(t)}{\mathcal{F}} + Q_x(s(t)),
\end{align}

\noindent
where $\mathcal{F}$ is Faraday's constant and $s$ stands for some variables/states of the battery\footnote{This formulation has a particular lumped parameter validity in the discharge process, where there are no chemical and electric potential gradients and the typical resulting distributed dynamics from these phenomena are negligible \cite{Potash-2016}.}. Thus, for an initial amount of neutral $\ce{ V(acac)_{3}}$  in the prepared solution $n_0$, considering the half-cell reactor volume $V_{\text{cell}}$ to be negligible with respect to the overall volume in the reservoir $V_{\text{res}}$, the state-of-charge of the overall system can be described by:

\noindent
\begin{align}
\label{soc}
\begin{split}
SOC(t) &= \frac{n_0-n(t)}{n_0} = \frac{c_0-c(t)}{c_0},  \\
\frac{d SOC}{dt} (t) &= \!-\!\left(\frac{1}{c_0 V_{\text{res}}}\right) Q_x(s(t)) \!-\! \left(\frac{1}{c_0 V_{\text{res}}\mathcal{F}}\right) I(t),
\end{split}
\end{align}

\noindent
with $c_0\!=\!n_0/V_{\text{res}}$ and $c(t)\!=\!n(t)/V_{\text{res}}$, being the initial and overall concentration of neutral species in the battery, respectively.

\vspace{0.2cm}

\noindent
Similarly to a continuous stirred tank reactor model, we apply conservation of mass to one half-cell only, and assuming perfect mixing, this leads to

\noindent
\begin{align}
& \frac{d \left(n_{\text{cell}}-n\right)}{dt}(t) =Q(t) \left( c(t) - c_{\text{cell}}(t) \right), \nonumber \\
\label{soc_cell}
& SOC_{\text{cell}}(t) = \frac{c_0-c_{\text{cell}}(t)}{c_0},\\
& \frac{d SOC_{\text{cell}}}{dt}(t) = \!-\!\left(\frac{1}{\epsilon c_0 V_{\text{cell}}}\right) \frac{dn}{dt}(t) \!-\! \left(\frac{Q(t)}{\epsilon V_{\text{cell}}}\right) \Delta SOC(t), \nonumber
\end{align}

\noindent
where $n_{\text{cell}}$ represents the amount of neutral $\ce{V(acac)_{3}}$ in the half-cell, $SOC_{\text{cell}}$ represents the state of charge in the reactor (which may be different from the overall SOC), and $Q$ stands for the volumetric flow rate in the reactor (considered measurable and equal in both chambers); $c_{\text{cell}}(t)\!=\!n_{\text{cell}}(t)/(\epsilon V_{\text{cell}})$  and $\Delta SOC(t)\!=\! SOC_{\text{cell}}(t) \!-\!SOC(t)$ with $\epsilon$ accounting for the known porosity of the carbon electrode. Thus, from \eqref{soc}-\eqref{soc_cell}, the resulting space-state isothermal lumped parameter model for the DRFB is

\noindent
\begin{align}
&   \begin{bmatrix} \frac{d SOC}{dt}(t) \\[0.1cm] \frac{d SOC_{\text{cell}}}{dt}(t)  \end{bmatrix} = \underbrace{\begin{bmatrix} 0 & 0\\ \frac{Q(t)}{\epsilon V_{\text{cell}}} & -\frac{Q(t)}{\epsilon V_{\text{cell}}} \end{bmatrix}}_{A(Q(t))} \begin{bmatrix}  SOC(t) \\ SOC_{\text{cell}} (t)\end{bmatrix} + \nonumber \\
\label{model_x}
& \underbrace{\begin{bmatrix}  -\frac{1}{c_0 V_{\text{res}}} \\[0.1cm] -\frac{1}{\epsilon c_0 V_{\text{cell}}} \end{bmatrix}}_{E} Q_x(s(t)) + \underbrace{\begin{bmatrix}  -\frac{1}{c_0 V_{\text{res}} \mathcal{F}} \\[0.1cm] -\frac{1}{\epsilon c_0 V_{\text{cell}} \mathcal{F} } \end{bmatrix}}_{B} I(t), \\
\label{model_y}
& V_{\text{out}} (t) = \underbrace{E^{\text{0}}_{\text{cell}} \!+\! \frac{2\mathcal{RT}}{\mathcal{F}} \ln{\left(\frac{SOC_{\text{cell}}(t)}{1\!-\!SOC_{\text{cell}}(t)}\right)} \!+\! V_R(t)}_{\Gamma(SOC_{\text{cell}}(t),I(t))},
\end{align}

\noindent
where $V_{\text{out}}$ stands for the measurable output voltage of the battery in accordance with the Nernst equation, $\mathcal{R}$ is the universal gas constant, $\mathcal{T}\!=\!275 \ ^{\circ} \text{K}$ and $V_R=V_R(SOC_{\text{cell}}(t),I(t))$ accounts for voltage drops due to overpotentials, likely caused by kinetic and resistive phenomena ($V_R(SOC_{\text{cell}}(t),0)=0$).

\subsection{General Approximate Model}
Due to the nature of the electrochemical phenomena involved in RFBs, it can be considered that the unknown function for crossover flow $Q_x \! \in \! \mathcal{C}(\Omega; \mathbb{R})$, with $\Omega$ a compact set, and $s \! : \! \mathbb{R}^+ \!\! \to \! \Omega$ a vector of $k \! \in \! \mathbb{N}$ battery variables, which could be states, inputs or outputs of the model, or even other measurements, independent of the model. Thus, using the universal approximation property of radial basis functions (RBFs) \cite{Park-1991} or fuzzy inference systems \cite{Jang-1990} (in accordance with Stone-Weierstrass approximation theorem \cite{Atkinson-2009}),

\noindent
\begin{align}
\label{Q_app}
Q_x(s(t))= \Psi(s(t)) \theta +  \varepsilon(s(t)),
\end{align}

\noindent
$\forall \ t \! \in \! [t_i, t_{i+1}]_{i \in \mathbb{N}}$, with  $\theta \!\in\! \mathbb{R}^m$ being a piece-wise constant vector of parameters and $\varepsilon$ an arbitrarily small approximation error in accordance with $m \!\in\! \mathbb{N}$ number of bounded basis functions $ \psi_j \!:\! \overline{\Omega} \! \supseteq \! \Omega \! \to \! \mathbb{R}$ selected $\forall \ j \! \in \! [0,m]$, where $\Psi\=[\psi_1,\ldots, \psi_m]$.

\vspace{0.2cm}

Let $x=[SOC, SOC_{\text{cell}}]^{\top}$ be the vector of battery states. Using the above-described approximation of the crossover function, \eqref{model_x}-\eqref{model_y} can be formulated as

\noindent
\begin{align}
\dot{x} (t) =& A(Q(t)) x(t) + E \Psi(s(t)) \theta(t) +  B I(t) + E \varepsilon(s(t)), \nonumber\\
\label{mod_app} \dot{\theta} (t) =& 0^{\top},  \qquad \quad \forall \ t \in [t_i, t_{i+1}],\\[-0.5cm]
y (t) =& \Gamma^{-1}(V_{\text{out}}(t),I(t),V_R(t))=\overbrace{\begin{bmatrix} 0 & 1 \end{bmatrix}}^{C} x(t) + w(t), \nonumber
\end{align}

\noindent
with\footnote{In terms of the observer design, matrices $A,B,C$ and $E$ are considered known. The inversion of $\Gamma$ is based on Condition \ref{c_inv}, for small errors in the voltage measurements.}  $x(0)\!=\!x_0$ and $\theta(0)\!=\!\theta_0$ being unknown state and parameter initial conditions, respectively, $y$ being the measurable battery output, $w$ the measurement error, and $\bigcup_{i \in \mathbb{N}} [t_i, t_{i+1}] \!=\! \mathbb{R}^+$ for consecutive time intervals. This model, however, must satisfy the following conditions:

\begin{cond}
\label{c_inv}
The functional structure of the $V_R$ term in \eqref{model_y} is such that the non-linear mapping $\Gamma$ is globally invertible\footnote{In particular, based on the strictly increasing (monotonic) property of the logarithm function, this is satisfied under open-circuit conditions (zero applied current and internal self-discharge) since $V_R(SOC_{\text{cell}}(t),0)=0$, for $SOC_{\text{cell}}(t) \neq 1, \ \forall \ t \in \mathbb{R}^+$.}. In addition, due to errors in the measurements, the inversion of this mapping is such that $w$ in \eqref{mod_app} is always bounded: $\sup_{t}\{\|w(t) \| \} \leq \bar{w}$, $\forall \ t \in \mathbb{R}^+$, for some $\bar{w} \in \mathbb{R}^{+}$.
\end{cond}

\begin{cond}
\label{c_par}
Crossover through the separator represents a slow degradation process such that in \eqref{mod_app} there exist  $[t_i,t_{i+1}]$ finite time intervals $\forall \ i \! \in \! \mathbb{N}$, and bounded parameters: $\sup_t\{\|\theta(t) \| \} \leq \gamma_{\theta}$, which lead to a bounded approximation error: $\sup_{t}\{\|\varepsilon(s(t)) \| \} \leq\bar{\varepsilon}$ in \eqref{Q_app},
$\forall \ t \in \mathbb{R}^+$, for some $\gamma_{\theta} \in \mathbb{R}^{+}$ and $\bar{\varepsilon} \in \mathbb{R}^{+}$.
\end{cond}

\begin{cond}
\label{c_lip} The vector of basis functions $\Psi$ in \eqref{Q_app} is always bounded: $\sup_t\{\|\Psi(z(t))\|\} \leq \! \gamma_{\Psi}$, $\forall \ t \! \in \! \mathbb{R}^{+}$ and every vector function $z: \! \mathbb{R}^+ \! \to \! \mathbb{R}^k$, with $\gamma_{\Psi}=\sup_{t \in \mathbb{R}^+}\{\|\Psi(z(t))\|\} \in \! \mathbb{R}^{+}$. In addition, this satisfies the Lipchitz condition with respect to the battery states/variables, namely $\|\Psi(s(t))\!-\!\Psi(\hat{s}(t))\| \leq \gamma_{\tilde{\Psi}} \|s(t)-\hat{s}(t)\|$, for some $\gamma_{\tilde{\Psi}} \! \in \! \mathbb{R}^{+}$, $\forall \ s\!:\! \mathbb{R}^+ \to \Omega$ and $\hat{s} \!:\! \mathbb{R}^+ \to \overline{\Omega} \supseteq \Omega$,  $\forall \ t \in \mathbb{R}^+$.
\end{cond}
\begin{cond}
\label{c_Q}
The volumetric flow rate $Q$ is a bounded measurable variable with $Q_{m}\!=\!\inf_{t} \{Q(t)\}>0$ and $Q_{M}\!=\!\sup_{t} \{Q(t)\}$, so that its domain of operation $\mathcal{Q}\!=\![Q_{\text{m}}, Q_{\text{M}}]$ is a compact set known in advance. Thus, the time-varying matrix $A$ in \eqref{model_x} can always be embedded in a polytope of matrices $\mathcal{A}$ where

\noindent
\begin{align*}
A(Q(t)) \in \mathcal{A} &= \mathbf{Co} \{A(Q_{\text{m}}), A(Q_{\text{M}})\} \\
\end{align*}

\vspace{-0.6cm}

\noindent
$\forall \ t \in \mathbb{R}^+$, where $\mathbf{Co}$ denotes the convex hull (minimal convex polytope) \cite{Boyd-1994,Anstett-2009}.
\end{cond}
%

%---------------------------------------------------------------------%
%---------------------------------------------------------------------%
\section{Adaptive Observer Design}
\label{Sobs}

For the approximate model \eqref{mod_app}, the observer design can be carried out using Lyapunov stability theory \cite{Khalil-2002} applied to state and parameter estimation of nonlinear uncertain systems \cite{Walcott-1987, Marino-1990, Kim-1997,Cho-1997, Arcak-2001, Ascencio-2004, Millerioux-2004}. The aim is to achieve stability in the sense of uniformly ultimately bounded (UUB) convergence of the observer estimation error.

\subsection{Adaptive Observer}
An adaptive Luenberger-type observer \cite{Gildas-2007} for simultaneous estimation of battery state and parameters for the system \eqref{mod_app} has the structure

\noindent
\begin{align}
\dot{\hat{x}}(t) &= A(Q(t)) \hat{x}(t) + E \Psi(\hat{s}(t))
\hat{\theta}(t) + BI(t) +  L \tilde{y}(t), \nonumber \\[-0.2cm]
\label{mod_obs} \dot{\hat{\theta}} (t) &= \Phi (\tilde{y}(t),\Psi(\hat{s}(t))),\\
\hat{y}(t) &= C \hat{x}(t), \nonumber
\end{align}

\noindent
where $\hat{x}$, $\hat{\theta}$ and $\hat{y}$ are estimated
states, parameters and outputs, respectively, $\hat{x}(0)\!=\!\hat{x}_0$ and $\hat{\theta}(0)\!=\!\hat{\theta}_0$ are estimated initial conditions, $\hat{s}$ stands for the estimated states/variables of the battery chosen to model the crossover flux, $\tilde{y}(t)\!=\!y(t)\!-\!\hat{y}(t)$ denotes the output estimation error, $L \in \mathbb{R}^{n}$ is the observer linear feedback gain and $\Phi$ is the adaptive compensation function to adapt parameters in \eqref{Q_app} as the battery evolves.

\subsection{Observer Estimation Error}
The dynamical error between the model \eqref{mod_app} and the proposed observer \eqref{mod_obs} can be written as:

\noindent
\begin{align}
\label{err_p}
\begin{split}
\dot{\tilde{x}}(t) &= \bar{A}(t) \tilde{x} (t)+ E \eta(t) - L w(t), \\
\eta(t) &= \tilde{\Psi}(t) \theta (t) + \hat{\Psi}(t) \tilde{\theta}(t) + \varepsilon(t) \\
\dot{\tilde{\theta}} (t) &= -\Phi(\tilde{y}(t),\hat{\Psi}(t)), \qquad \qquad \qquad \forall \ t \in [t_i, t_{i+1}], \\
\tilde{y}(t)& =C\tilde{x}(t)+w(t),
\end{split}
\end{align}

\noindent
where $\tilde{x}(t)\!=\!x(t)\!-\!\hat{x}(t)$,
$\tilde{\theta}(t)\!=\!\theta(t)\!-\!\hat{\theta}(t)$ and $\tilde{y}(t)\!=\!y(t)\!-\!\hat{y}(t)$ are estimation errors for states, parameters and outputs, respectively, $\tilde{x}(0)\!=\!\tilde{x}_0$ and $\tilde{\theta}(0)\!=\!\tilde{\theta}_0$ are initial error conditions; $\tilde{\Psi}(t)\!\triangleq\!\Psi(s(t))\!-\!\Psi(\hat{s}(t))$, $\hat{\Psi}(t)\!\triangleq\!\Psi(\hat{s}(t))$ and $\varepsilon(t) \triangleq \varepsilon(s(t))$; $\bar{A}(t) \triangleq A(Q(t))-LC$ is the closed-loop compensated matrix.

\begin{proper}
\label{p_bound}
Given the error dynamic \eqref{err_p}, the expression

\noindent
\begin{align}
\label{delta}
\delta(t) &= P E  \left((1-\rho) \tilde{\Psi}(t) \theta(t) + \varepsilon(t)\right)  -Z w(t)
\end{align}

\noindent
represents part of the uncertain dynamic which cannot be compensated by the observer feedback gain nor adaptive function in \eqref{mod_obs}, with $P \in \mathbb{S}^n_+$, $Z \in \mathbb{R}^n$ and for some $\rho \in [0,1]$. If Conditions \ref{c_inv}-\ref{c_lip} are satisfied, the following upper bound hold:

\noindent
\begin{align*}
\| \delta(t) \| & \leq  \sup_{t \in \mathbb{R}^{+}} \{ \| \delta(t) \| \}, \\
& \leq \|PE\| \left( 2 (1-\rho) \gamma_{\Psi} \gamma_{\theta} + \bar{\varepsilon} \right) + \|Z\| \bar{w}= \bar{\delta}, \end{align*}

\noindent
$\forall \ t \in \mathbb{R}^+$, where the boundedness of the basis functions selected plays a relevant role.
\end{proper}

\subsection{Observer Feedback Gain and Adaptation Law}
\begin{lemma}
\label{lem_1}
Let $\upsilon(t) \triangleq \rho E \tilde{\Psi}(t) \theta(t)$ be a bounded signal for some $\rho \in [0,1]$, and state-variables $s$ in \eqref{Q_app} be such that $\|\tilde{s}\| \leq \gamma_{\tilde{s}} \|\tilde{x}\|$ for some $\gamma_{\tilde{s}} \in \mathbb{R}^+$. If Conditions \ref{c_par}-\ref{c_lip} are satisfied and $\alpha \in \mathbb{R}^{+}$ and $\beta \in \mathbb{R}^{+}$ are properly selected so that $\gamma^{2} \leq \alpha \beta$, then the following inequality holds:

\noindent
\begin{align*}
2 \tilde{x}(t)^{\top} P \upsilon(t) & \leq \tilde{x}(t)^{\top} \left(\alpha P P +
\beta I_n \right) \tilde{x}(t),
\end{align*}

\noindent
where $P \in \mathbb{S}^n_+$ and $\gamma \!=\! \rho \gamma_{E} \gamma_{\theta} \gamma_{\tilde{\Psi}} \gamma_{\tilde{s}}$ with $\gamma_{E}\!=\!\|E\|$, $\forall \ s\!:\! \mathbb{R}^+ \to \Omega$, $\forall \ \hat{s}\!:\! \mathbb{R}^+ \to \overline{\Omega} \supseteq \Omega$ and $ \forall \ t \in \mathbb{R}^+$.
\end{lemma}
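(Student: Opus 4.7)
The target inequality has the shape of a Young-type bound on the cross term $2\tilde x(t)^{\top} P \upsilon(t)$, so my strategy is to split that cross term by completing the square with a tunable parameter, and then absorb the resulting residual into the $\beta I_n$ summand by chaining the boundedness and Lipschitz hypotheses on $\theta$, $\Psi$, and $\tilde s$.

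First I would apply the elementary quadratic inequality
\begin{equation*}
2 a^{\top} b \;\leq\; \alpha\, a^{\top} a \;+\; \tfrac{1}{\alpha}\, b^{\top} b, \qquad \alpha > 0,
\end{equation*}
valid for any vectors $a,b\in\mathbb{R}^n$ (it follows from expanding $\|\sqrt{\alpha}\,a - \tfrac{1}{\sqrt{\alpha}}b\|^2 \geq 0$). Choosing $a = P\tilde x(t)$ and $b = \upsilon(t)$ gives
\begin{equation*}
2\, \tilde x(t)^{\top} P \upsilon(t) \;\leq\; \alpha\, \tilde x(t)^{\top} P P\, \tilde x(t) \;+\; \tfrac{1}{\alpha}\, \upsilon(t)^{\top} \upsilon(t),
\end{equation*}
which already reproduces the $\alpha PP$ piece appearing in the claim. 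The remaining task is therefore to dominate the scalar $\tfrac{1}{\alpha}\|\upsilon(t)\|^2$ by $\beta\,\tilde x(t)^{\top}\tilde x(t)$.

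Next I would estimate $\|\upsilon(t)\|$ by submultiplicativity applied factor by factor to the definition $\upsilon(t) = \rho E\,\tilde\Psi(t)\,\theta(t)$. Condition \ref{c_par} supplies $\|\theta(t)\|\leq \gamma_\theta$, Condition \ref{c_lip} supplies the Lipschitz bound $\|\tilde\Psi(t)\|\leq \gamma_{\tilde\Psi}\,\|\tilde s(t)\|$, the stated assumption of the lemma supplies $\|\tilde s(t)\|\leq \gamma_{\tilde s}\,\|\tilde x(t)\|$, and $\|E\| = \gamma_E$ by definition. Chaining these estimates and squaring produces
\begin{equation*}
\|\upsilon(t)\|^2 \;\leq\; \rho^2\,\gamma_E^{2}\,\gamma_\theta^{2}\,\gamma_{\tilde\Psi}^{2}\,\gamma_{\tilde s}^{2}\;\tilde x(t)^{\top}\tilde x(t) \;=\; \gamma^2\,\tilde x(t)^{\top}\tilde x(t),
\end{equation*}
using the definition $\gamma = \rho\,\gamma_E\gamma_\theta\gamma_{\tilde\Psi}\gamma_{\tilde s}$.

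Finally, the admissibility condition $\gamma^2 \leq \alpha\beta$ is exactly the slack needed to package the bound into the stated form: it gives $\tfrac{1}{\alpha}\|\upsilon(t)\|^2 \leq \tfrac{\gamma^2}{\alpha}\,\tilde x(t)^{\top}\tilde x(t) \leq \beta\,\tilde x(t)^{\top}\tilde x(t)$, and adding this to the Young's-inequality bound above yields $2\,\tilde x(t)^{\top}P\upsilon(t)\leq \tilde x(t)^{\top}(\alpha PP + \beta I_n)\tilde x(t)$. I do not anticipate a real obstacle — the result is essentially a packaged Young's inequality — but the one subtlety worth emphasising is the coupled choice of $\alpha$ and $\beta$: a naive application would leave $\tfrac{\gamma^2}{\alpha}$ in the identity slot rather than the designer-chosen $\beta$, and the condition $\gamma^2 \leq \alpha\beta$ is precisely what permits decoupling the two parameters for use later in the LMI formulation of the observer gain synthesis.
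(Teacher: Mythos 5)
Your proposal is correct and follows essentially the same route as the paper: both arguments chain the submultiplicative and Lipschitz bounds $\|\upsilon\|\le\rho\gamma_E\gamma_{\tilde\Psi}\gamma_\theta\gamma_{\tilde s}\|\tilde x\|=\gamma\|\tilde x\|$ and then invoke a weighted Young's/AM--GM inequality, the only cosmetic difference being that the paper applies Cauchy--Schwarz first and uses the expansion of $(\sqrt{\alpha}\|\tilde x^{\top}P\|-\sqrt{\beta}\|\tilde x\|)^2\ge 0$ directly with the slack $\gamma\le\sqrt{\alpha\beta}$, whereas you complete the square with weights $\alpha$ and $1/\alpha$ and absorb $\gamma^2/\alpha\le\beta$ at the end. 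Your closing remark about why the decoupling condition $\gamma^2\le\alpha\beta$ matters for the later LMI synthesis is accurate and consistent with how the paper uses the lemma.
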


\begin{proof}
Following similar arguments described in \cite{Reif-1999,Ascencio-2004}, based on the binomial property $(\sqrt{\alpha} \|\tilde{x}^{\top} \! P \|\!-\!\sqrt{\beta} \|\tilde{x}\|)^2 $ $=\! \alpha \|\tilde{x}^{\top} \! P \|^2 \!+\! \beta \|\tilde{x}\|^2 \!-\! 2 \sqrt{\alpha \beta} \|\tilde{x}^{\top} \!P \| \|\tilde{x}\| \geq \! 0$, using bounds stated in Conditions \ref{c_par}-\ref{c_lip} and considering $\|\tilde{s}\| \leq \! \gamma_{\tilde{s}} \|\tilde{x}\|$, the selection of $\alpha$ and $\beta$ such that $\gamma^{2} \leq \alpha \beta$ yields

\noindent
\begin{align*}
2 \tilde{x}^{\top} P v(t) & \leq 2 \|\tilde{x}^{\top} P\| \|v(t)\| \leq 2 \rho \|\tilde{x}^{\top} P\| \|E\| \|\tilde{\Psi}\| \|\theta\|, \\
& \leq 2 \rho \gamma_E \gamma_{\theta} \gamma_{\tilde{\Psi}}\|\tilde{x}^{\top} P\| \|\tilde{s}\| \leq 2 \sqrt{\alpha \beta} \|\tilde{x}^{\top} P \| \|\tilde{x}\|, \\
& \leq \alpha \|\tilde{x}^{\top} P \|^2 + \beta \|\tilde{x}\|^2 = \tilde{x}^T (\alpha P P + \beta I_n) \tilde{x}.
\end{align*}

\vspace{-0.5cm}
\end{proof}
\begin{theorem}
\label{teo} Let Conditions \ref{c_inv}-\ref{c_lip} be satisfied by \eqref{mod_app}. If some positive constants $\alpha$ and $\beta$ are chosen such that $\alpha \beta \geq \gamma^{2}$ and the solution for the coupled Linear Matrix Inequality and Equality problem:

\noindent
\begin{align}
&\begin{bmatrix} \-\!A(Q(t))^{\top} \! P \- P A(Q(t))\!+\!C^{\top}\!Z^{\top}\!+\! Z C \- \beta I_n \-W & \!\!\! \sqrt{\alpha} P \\ \sqrt{\alpha} P & \!\!\! I_n \end{bmatrix}  \!\succeq\! 0, \nonumber \\
\label{lmi-lme}
& \ P E \!-\! C^{\top} F= 0,
\end{align}

\noindent
exists for some $P \in \mathbb{S}^n_+$, $W \!\succeq\! \textrm{diag}(\omega)$, $\omega \!\in\! \mathbb{R}^{n}$ ($\omega_j\!>0\! \ , \forall j\!=\!1,\ldots,n,\omega_i\!>0\! \ , \forall i\!=\!1,\ldots,n$), $Z \in \mathbb{R}^{n}$ and $F \in \mathbb{R}$, $\forall \ Q \in \mathcal{Q}$, then the adaptive observer \eqref{mod_obs} using the feedback gain and adaptation function

\noindent
\begin{align}
\label{law_l} L &= P^{-1} Z , \\
\label{law_p} \Phi (\tilde{y}(t),\hat{\Psi}(t)) &= \Lambda^{-1} \left(\hat{\Psi}^{\top} F
\tilde{y}(t)-\frac{1}{2}\sigma \hat{\theta} \|\tilde{y}(t)\|  \right),
\end{align}

\noindent
respectively, for some $\sigma \in \mathbb{R}^{+}$ and $\Lambda \in \mathbb{S}^m_+$, guarantees that the state estimation error $\tilde{x}$ and the parameter estimation error $\tilde{\theta}$ have a UUB dynamic \cite[pp.~168,~346]{Khalil-2002}.

\end{theorem}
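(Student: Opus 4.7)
My approach is to apply Lyapunov's direct method with the composite, radially unbounded candidate
\begin{align*}
V(\tilde{x},\tilde{\theta}) = \tilde{x}^{\top} P \tilde{x} + \tilde{\theta}^{\top} \Lambda \tilde{\theta},
\end{align*}
so that $\dot V$ along the error dynamics \eqref{err_p} decomposes into the quadratic state part $\tilde{x}^{\top}(\bar{A}^{\top} P + P \bar{A})\tilde{x}$, three contributions coming from the three summands of $\eta$ inside $2\tilde{x}^{\top} P E \eta$, a measurement-noise term $-2\tilde{x}^{\top} P L w$, and the adaptation contribution $2\tilde{\theta}^{\top}\Lambda \dot{\tilde{\theta}}$. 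The design choice $L = P^{-1} Z$ converts the noise term into $-2\tilde{x}^{\top} Z w$, so that it merges cleanly with the structural perturbations collected in $\delta(t)$ of Property \ref{p_bound}.

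The crucial algebraic step is the cancellation of the $\tilde{\theta}$ cross-term. Substituting the LME $P E = C^{\top} F$ into $2\tilde{x}^{\top} P E \hat{\Psi}\tilde{\theta}$ yields $2 F (\tilde{y}-w)\hat{\Psi}\tilde{\theta}$, while inserting \eqref{law_p} into $\dot{\tilde{\theta}} = -\Phi$ produces $-2 F \tilde{y}\hat{\Psi}\tilde{\theta}$ that exactly cancels the $\tilde{y}$ contribution, leaving the bounded noise residual $-2 F w \hat{\Psi}\tilde{\theta}$. The $\sigma$-leakage of \eqref{law_p} contributes $\sigma \tilde{\theta}^{\top}\hat{\theta}\|\tilde{y}\|$; writing $\hat{\theta}=\theta-\tilde{\theta}$ splits this into the dissipative piece $-\sigma\|\tilde{\theta}\|^{2}\|\tilde{y}\|$ and a disturbance piece $\sigma \tilde{\theta}^{\top}\theta\|\tilde{y}\|$ bounded via Condition \ref{c_par}, which is the classical $\sigma$-modification pattern.

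For the state part, I would split the remaining $2\tilde{x}^{\top} P E \tilde{\Psi}\theta$ into a $\rho$-scaled and $(1-\rho)$-scaled piece. Lemma \ref{lem_1}, under the chosen $\alpha,\beta$ with $\alpha\beta\ge \gamma^{2}$, bounds the $\rho$-piece by $\tilde{x}^{\top}(\alpha P P + \beta I_n)\tilde{x}$. The $(1-\rho)$-piece together with the $\varepsilon$ and $Z w$ contributions reassemble exactly into $2\tilde{x}^{\top}\delta(t)$ with $\|\delta(t)\|\le\bar{\delta}$ from Property \ref{p_bound}. Applying Schur's complement to \eqref{lmi-lme} produces $\bar{A}^{\top} P + P \bar{A} + \alpha P P + \beta I_n + W \preceq 0$ at the vertices $A(Q_m),A(Q_M)$, and Condition \ref{c_Q} extends it to every $A(Q(t))\in \mathcal{A}$ by convex combination. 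Collecting everything yields
\begin{align*}
\dot V \le -\tilde{x}^{\top} W \tilde{x} - \sigma \|\tilde{\theta}\|^{2}\|\tilde{y}\| + 2\tilde{x}^{\top}\delta + \sigma \tilde{\theta}^{\top}\theta \|\tilde{y}\| - 2 F w \hat{\Psi}\tilde{\theta}.
\end{align*}

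The concluding step is to bound the three indefinite residuals using Young's inequality to extract a negative-definite quadratic form in $(\tilde{x},\tilde{\theta})$ plus a constant depending on $\bar{\delta},\bar{w},\gamma_{\Psi},\gamma_{\theta},\sigma,F$, giving an estimate of the form $\dot V \le -\mu V + \kappa$ outside a ball, whence UUB of $(\tilde{x},\tilde{\theta})$ follows by Theorem~4.18 of Khalil. I expect the trickiest step to be the leakage handling: because $\|\tilde{y}\|$ appears linearly (not quadratically) in the dissipation, the Young-inequality splittings must be balanced so that $\sigma \tilde{\theta}^{\top}\theta\|\tilde{y}\|$ is strictly dominated on the complement of the ultimate-bound ball, while simultaneously preserving the polytopic validity of the LMI across $\mathcal{Q}$; essentially this is where $\sigma$ and $\Lambda$ set the ultimate bound.
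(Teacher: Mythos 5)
Your proposal follows essentially the same route as the paper's proof: the same composite Lyapunov function $V=\tilde{x}^{\top}P\tilde{x}+\tilde{\theta}^{\top}\Lambda\tilde{\theta}$, the same $\rho$-splitting of $\eta$ handled by Lemma \ref{lem_1} and Property \ref{p_bound}, the same LME-based cancellation of the $\tilde{\theta}$ cross-term together with the classical $\sigma$-modification residual, and the same Schur-complement reading of \eqref{lmi-lme} extended over $\mathcal{Q}$ by convexity. The only cosmetic difference is that the paper completes squares to exhibit the explicit ball radii \eqref{radii} rather than packaging the estimate as $\dot V\le-\mu V+\kappa$; the conclusion is the same UUB statement.
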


\begin{proof}
Let $V \!=\! \tilde{x}^{\top} \! P \tilde{x} + \tilde{\theta}^{\top} \! \Lambda \tilde{\theta}$ be a Lyapunov function candidate\footnote{For clarity, the time-dependence in most of the functions after this has been dropped.}. Its time derivative along the trajectory \eqref{err_p} yields

\noindent
\begin{align}
\dot{V} = \ & \tilde{x}^{\top} \!\left( \bar{A}(t)\!^{\top}
\!P \!+\! P \bar{A}(t) \right) \tilde{x} \!+\! 2 \tilde{x}^{\top} P E \eta(t) \ + \nonumber \\
\label{lyap_01}
& 2 \dot{\tilde{\theta}}^{\top} \Lambda \tilde{\theta} - 2 \tilde{x}^{\top} P L w(t).
\end{align}

\noindent
If the signal $\eta$ in \eqref{err_p} is decomposed by factor $\rho \in [0,1]$, then $\eta=\eta_0 + (1-\rho)\tilde{\Psi}\theta + \hat{\Psi} \tilde{\theta} + \varepsilon$, where $\eta_0=\rho \tilde{\Psi}\theta$ represents the uncertain term which can be compensated by the observer feedback term $L\tilde{y}(t)$ in \eqref{mod_obs}. Thus, considering \eqref{law_l} so that  $\bar{A}(t)=A(Q(t))\!-\!P^{-1}ZC$, \eqref{lyap_01} can be written as

\noindent
\begin{align}
 & \dot{V} = \tilde{x}^{\top} \left( A(Q(t))^{\top} P + P  A(Q(t)) \!-\! C^{\top}Z^{\top}  \!-\! ZC \right) \tilde{x} + \nonumber \\
\label{lyap_02}
& \underbrace{2 \tilde{x}^{\top} P E \hat{\Psi} \tilde{\theta} + 2 \dot{\tilde{\theta}}^{\top} \Lambda \tilde{\theta}}_{T_1} + \underbrace{2 \tilde{x}^{\top} P (\rho E \tilde{\Psi} \theta)}_{T_2} + 2 \tilde{x}^{\top} \delta(t).
\end{align}

\noindent
Based on the LME condition in \eqref{lmi-lme}, substituting \eqref{law_p} into \eqref{lyap_02} and considering bounds from Conditions \ref{c_inv}-\ref{c_lip},  the term $T_1$ can be upper bound by:

\noindent
\begin{align}
T_{1} =& \ 2 \tilde{x}^{\top} C^{\top} \! F \hat{\Psi} \tilde{\theta} \- 2\big(\Lambda^{-1} [\hat{\Psi}^{\top} F \tilde{y}\-(1/2)\sigma\hat{\theta}
\|\tilde{y}\| ] \big)^{\top} \! \Lambda \tilde{\theta}, \nonumber\\
=& \ \sigma \|C\tilde{x} + w\| (\theta-\tilde{\theta})^{\top}\tilde{\theta}\!-\! 2w^{\top}F(\Psi\!-\!\tilde{\Psi})\tilde{\theta}, \nonumber \\
\leq & \ \sigma \gamma_C \|\tilde{x}\| (\gamma_{\theta} \|\tilde{\theta}\|-\|\tilde{\theta}\|^2) + 2 \gamma_F\gamma_{\tilde{\Psi}} \|w\| \|\tilde{x}\|\|\tilde{\theta}\| + \nonumber \\
 \label{ada_01}  & \ \sigma \|w\|(\gamma_{\theta} \|\tilde{\theta}\|-\|\tilde{\theta}\|^2) + | -w^{\top} F\Psi \tilde{\theta}|, \\
\leq & \ \sigma \gamma_C \|\tilde{x}\|\Big(\overbrace{\Big[\gamma_{\theta} + \frac{2  \gamma_F\gamma_{\tilde{\Psi}}}{\sigma \gamma_C}\bar{w} \Big]}^{\gamma_1}\|\tilde{\theta}\|-\|\tilde{\theta}\|^2\Big) + \nonumber \\
& \ \sigma \bar{w} \Big(\underbrace{\Big[\gamma_{\theta}  + \frac{2 \gamma_F \gamma_{\Psi}}{ \sigma} \Big]}_{\gamma_2} \|\tilde{\theta}\|-\|\tilde{\theta}\|^2 \Big), \nonumber
\end{align}

\vspace{-0.2cm}

\noindent
where $\gamma_{C} \!= \!\|C\|$ and $\gamma_{F} \!=\! \|F\|$. With respect to the term $T_2$, by virtue of Lemma \ref{lem_1} for the positive constants $\alpha$, $\beta$ so that $\gamma^{2} \leq \alpha \beta$ and Property \ref{p_bound}, this leads to:

\noindent
\begin{align}
& \dot{V} \! \leq \! \tilde{x}^{\top} \!\Big( \underbrace{ \! A(Q(t))^{\top} \! P \+ P  A(Q(t)) \-C^{\top}\!Z^{\top}  \!\-ZC \+ \alpha P P \+ \beta I_n }_{T_3} \! \Big) \tilde{x} \nonumber \\[-0.1cm]
\label{lyap_03}
&  \+ \sigma \gamma_{C} \|\tilde{x}\| \left(\!\gamma_{1} \|\tilde{\theta}\|\-
\|\tilde{\theta}\|^{2}\right) \+ \sigma \bar{w} \left(\!\gamma_{2} \|\tilde{\theta}\| \-
\|\tilde{\theta}\|^{2}\right) \+ \ 2 \|\tilde{x}\|  \bar{\delta}.
\end{align}

\noindent
Imposing the condition $T_3(t) \!\leq\! -W$, for some $W \!\succeq\! \textrm{diag}(\omega)$, $\omega \!\in\! \mathbb{R}^{n}$ ($\omega_i\!>0\! \ , \forall i\!=\!1,\ldots,n$) and $\forall \ t \in \mathbb{R}^+$, a Riccati-like inequality is posed \cite{Reif-1999} which, by means of the Schur complement \cite{Boyd-1994,Cho-1997}, can be transformed to the constrained time-variant problem \eqref{lmi-lme}, convex in terms of the variables $\{ P, Z, F, W\}$. If this problem is feasible, \eqref{lyap_03} can be written as:

\noindent
\begin{align}
\dot{V} & \leq - \varrho \gamma_W \|\tilde{x}\|^{2} -
\sigma \gamma_{C} \|\tilde{x}\|\left(\|\tilde{\theta}\| \! - \! \frac{\gamma_1}{2}\right)^{2} - \sigma \bar{w} \|\tilde{\theta}\| \left(\|\tilde{\theta}\| - \gamma_2\right)\nonumber \\
\label{lyap_04}  & \ - \|\tilde{x}\|
\left( (1-\varrho) \gamma_W \|\tilde{x}\| - \frac{\sigma \gamma_{C}
\gamma_1^{2}}{4} - 2 \bar{\delta} \right),
\end{align}

\noindent
with $\gamma_W=\lambda_{\text{min}}(W)$, for some $\varrho \in [0,1]$, so that \eqref{lyap_04} is negative defined outside of the ball $B=B(0,[r_{\tilde{x}}, r_{\tilde{\theta}}])$ set forth by the radii:
\begin{align}
\label{radii}
\begin{split}
\|\tilde{x}\| &\geq
\frac{1}{4(1-\varrho)\gamma_W} \left(\sigma \gamma_C \gamma_{1}^{2}
+ 8 \bar{\delta}\right)=r_{\tilde{x}},
\\ \|\tilde{\theta}\| & \geq \max \left\{\gamma_2, \frac{\gamma_1}{2} +
\left( \frac{\gamma_{1}^{2}}{4} + \frac{2 \bar{\delta}}{\sigma \gamma_C}
 \right)^{1/2}  \right\} =r_{\tilde{\theta}}.
\end{split}
\end{align}
Therefore, according to the standard Lyapunov theorem:
$\|\tilde{x}\|$ and $\|\tilde{\theta} \| $ have a convergent dynamic which is UUB \cite{Kim-1997,Narendra-1987, Khalil-2002,Ioannou-1996}.
\end{proof}

\begin{remark}
Under the ideal situation of $\delta\!=\!0$ in (\eqref{delta}),  zero output measurement error ($w\!=\!0$) and $\sigma=0$, the equilibrium point $(\tilde{x},\tilde{\theta})\!=\!0^{\top}$ is uniformly stable and $\lim_{t \to \+\infty} \tilde{x}(t) \to 0$ can be proved on the basis of Barbalat's lemma. However, as is well-known, the convergence of the parameters to their actual values is only guaranteed under persistent excitation conditions, namely $c_0 I_n \geq \int_{t_0}^{t_0+\Delta t} \Psi(s(\tau)) \Psi(s(\tau))^{\top} d\tau \geq c_1 I_n$, for some $\Delta t \in \mathbb{R}^+$, $c_0 \in \mathbb{R}^+$ and $c_1 \in \mathbb{R}^+$ \cite{Ioannou-1996,Narendra-1989,Ascencio-2004}.
\end{remark}

\subsection{Polytopic Design Approach}
Considering that the volumetric flow rate in \eqref{model_x} satisfies Condition \ref{c_Q}, the approximate model \eqref{mod_app} admits a polytope description so that the time-variant nature of the LMI-LME problem \eqref{lmi-lme} can be recast as a polytopic convex problem \cite{Ascencio-2004,Anstett-2009,Millerioux-2004,Boyd-1994}:

\noindent
\begin{align}
\label{poly_lmi_lme}
& \min_{\bar{\alpha}, \gamma_{Z}, \gamma_{F}}
\left\{ \bar{\alpha} + \kappa_{Z} \gamma_{Z} + \kappa_{F}
\gamma_{F} \right\} , \\
& \text{subject to:} \nonumber \\[-0.1cm]
& \forall \ i \= 1,2 \! : \! \left\{ \! \! \!
\begin{array}{l}
\begin{bmatrix} \!-\! A^{\top}_{i} P \!-\! P A_{i} \!+\! C^{\top}Z^{\top} \!+\! Z C \!-\! \overline{W} & P \\ P & \bar{\alpha} I_n \end{bmatrix} \succeq 0,
\end{array} \right. \nonumber \\
& \qquad \qquad \ \begin{bmatrix} \gamma_{Z} I_n & Z \\ Z^{\top} & \gamma_{Z} \end{bmatrix} \succeq  0, \ \begin{bmatrix} \gamma_{F} & F \\ F & \gamma_{F} \end{bmatrix} \succeq 0, \nonumber \\
& \qquad \qquad \ \ P=P^{\top} \succeq 0, W \succeq \text{diag}(\omega_1, \ldots,\omega_n), \nonumber\\
& \qquad \qquad \ \ P E - C^{\top} F=  0,\nonumber \\
& \qquad \qquad \ \ \bar{\alpha} >0, \gamma_{Z} \geq 0, \gamma_{F} \geq 0, \nonumber
\end{align}

\noindent
with $A_1 = A(Q_m)$ and $A_2 = A(Q_M)$ vertices of the convex hull $\mathcal{A}$, $\omega_j\!>0\! \ , \forall j\!=\!1,\ldots,n$; $\bar{\alpha}=1/\alpha$ degree of freedom to maximise the uncertainty allowed by the LMI-LME \eqref{lmi-lme}, for a given $\beta$ and provided its feasible solution $\{P,Z,F,W\}$, with $\overline{W}=\beta I_n+ W$. The constants $\kappa_{Z}\! \geq \! 0$ and $\kappa_{F} \! \geq \! 0$ are selected to adjust norm bounds of matrices $Z$ and $F$, respectively.

\begin{remark}
In the crossover approximation \eqref{Q_app} and consequent Lyapunov analysis of the observer estimation error \eqref{err_p}, the basis functions are not necessarily dependent on the measurable output variable. If this is the case, for zero output measurement error ($w\!=\!0$), the LMI-LME problem \eqref{lmi-lme} can be solved via a polytopic formulation of the Strictly Positive Real (SPR) condition on the resulting error dynamic \cite{Ioannou-1996}.
\end{remark}

%---------------------------------------------------------------------%
%---------------------------------------------------------------------%
\section{RFB Experimental Methods}
\label{lab}

The self-discharge experiment was performed inside an argon-filled glovebox (PureLab, Inert Technologies, USA) with 0.1 M vanadium acetylacetonate (V(acac)\textsubscript{3}, 98\%, Strem, UK) as the active species dissolved in anhydrous acetonitrile (ACN, 99.8\%, Sigma, UK) dried over molecular sieves (3 \AA, Sigma-Aldrich, USA) with 0.2 M tetraethylammonium tetrafluoroborate (TEABF\textsubscript{4}, Sigma, 99\%, UK) used as a supporting salt.

A flow cell designed for nonaqueous electrolytes was used with 2.20 cm\textsuperscript{2} of active area and reservoir volumes of 18 mL each. The flowrate of 9 mL/min corresponds to a reactor residence time of 4.65 s when using electrodes with a porosity $\epsilon$ of 0.87. Porous carbon felt electrodes (Alfa-Aesar, UK) compressed 50\% to a final thickness of 3.17 mm were used in a flow-through configuration alongside impervious bipolar graphite plates (GraphiteStore, USA). A polypropylene porous separator (Celgard 4650, Celgard, USA) was used to separate the counter-current electrolyte flow inside the cell.

Peristaltic pumps (MasterFlex, Cole-Palmer, USA) circulated electrolyte through the half-cells using polytetrafluoroethylene (PTFE) tubing with perfluoroalkoxy alkane (PFA) compression fittings. Wetted materials in the system consisted entirely of PTFE, PFA, polypropylene, impervious graphite, and carbon felt. Figure \ref{fig_exp} shows the operating experimental setup previously detailed.

\begin{figure}
    \centering
    \vskip+0.2cm
    \includegraphics[width=0.5\columnwidth]{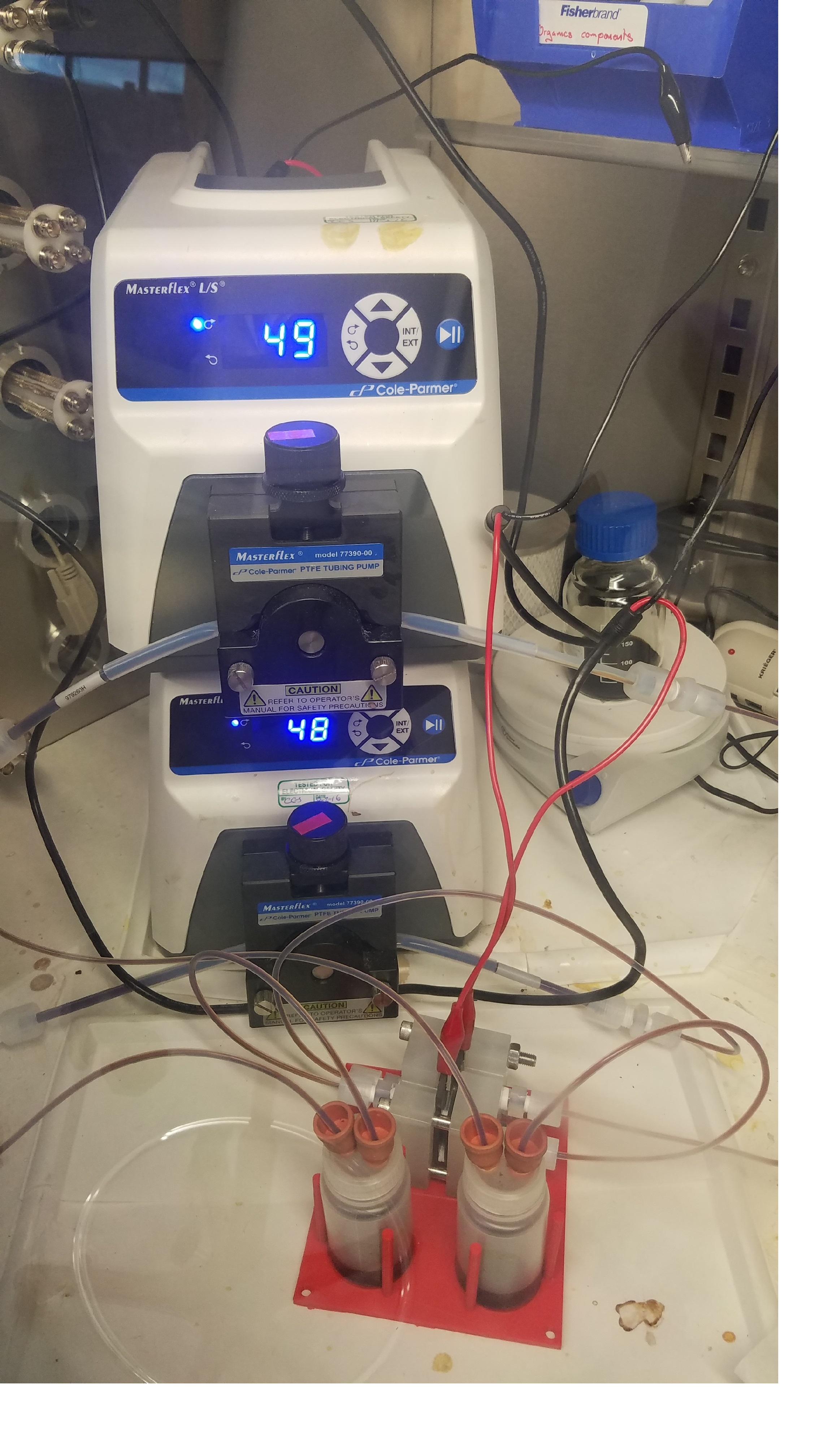}
    \vskip-0.1cm
     \caption{{\small RFB experiment running in glovebox}}
    \label{fig_exp}
    \vskip-0.1cm
\end{figure}

%---------------------------------------------------------------------%
%---------------------------------------------------------------------%
\section{Experimental Results}
\label{results}

The flow battery was cyled between 3 V and 1 V three times at 20 mA/cm\textsuperscript{2} with a battery tester (MACCOR 4000 series, USA) to precondition the system before charging to the 3 V voltage cutoff and letting the battery self-discharge at open circuit. Values of the experimental parameters in the model \eqref{model_x}-\eqref{model_y} are provided in Table \ref{tconst}.

For comparison and validation purposes, the following linear function for the crossover term in \eqref{model_x} has been considered:

\vspace{-0.2cm}

\noindent
\begin{align}
\label{lin_crossover}
Q_x(SOC_{\text{cell}}(t))=k_{\text{mt}} \ c_0 \overbrace{\left(\frac{c_0-c_{\text{cell}}(t)}{c_0}\right)}^{SOC_{\text{cell}}(t)},
\end{align}

\noindent
where $k_{\text{mt}}\!=\!5.6142 \! \cdot \! 10^{\!-8\!}$ L/min is the mass-transfer coefficient, a proposed constant which describes the expected linear relationship between $SOC_{\text{cell}}$ and $Q_x$. This value for $k_{mt}$ was calculated by fitting the model, \eqref{model_x}-\eqref{model_y}, including \eqref{lin_crossover}, to the experimental data assuming an initial condition of 100\% $SOC_{\textrm{cell}}$ at the beginning of the experiment and 50\% $SOC_{\textrm{cell}}$ when the output voltage of the battery reached the equilibrium voltage of 2.2 V. Figure \ref{fig_resul01} illustrates the model-predicted internal battery states and mass-transfer term, respectively (dash-dot lines). Figure \ref{fig_resul02} depicts the model voltage output as a function of the predicted $SOC_{\textrm{cell}}$, which is mostly in good agreement with its measured value (dotted line), with the exception of the extreme zones of $SOC_{\textrm{cell}}$, where unmodeled dynamics in the argument of the Nernst equation \eqref{model_y} are expected to be present.

Regarding the proposed on-line adaptive observer \eqref{mod_obs}, $\hat{x}_0\!=\![0.85,0.8]^{\top}$ and $\hat{\theta}_0\!=\!0^{\top}$ have been selected as its initial conditions. For the crossover approximation \eqref{Q_app}, $s\!=\!SOC_{\text{cell}}$, $\Omega \!=\![0,1]$ and seven ($m\!=\!7$) normalized radial basis functions, uniformly centred in $[0.05, 0.95]$ with variance $0.0081$, have been considered. With respect to the observer gains in \eqref{law_l}-\eqref{law_p}, $\Lambda^{\!-\!1}\!=\!4.798 \! \cdot \! 10^{\!-\!7} I_7$ and $\sigma\!=\!0.1$ have been selected to obtain to a slow parameter adaptation. The numerical solution of the polytopic LMI-LME problem \eqref{poly_lmi_lme}, considering $Q_{\text{m}}\!=\!0.9Q$, $Q_{\text{M}}\!=\!1.1 Q$, $\beta=10^{\!-\!4}$, $\kappa_F\!=\!10^{\!-\!5}$ and $\kappa_Z\!=\!1$, has been obtained via the Yalmip toolbox for Matlab \cite{Yalmip} using the SDP package part of the Mosek solver \cite{Mosek}. More details about its Matlab code implementation and data can be found in \cite{Matlab_code}. The simultaneous estimation of the state-of-charge and crossover flux is shown in Figure \ref{fig_resul01} (solid lines), achieving similar behaviour to the aforementioned model \eqref{model_x}-\eqref{model_y}-\eqref{lin_crossover}. In particular, the estimated crossover flux, whose parameter convergence to stationary values is shown in \ref{fig_resul01}(d), is in agreement with the linear relationship proposed in \eqref{lin_crossover}. Figure \ref{fig_resul02}, solid lines, illustrates the accurate on-line predicted output voltage based on the estimated states provided by the adaptive observer.

%-----------------------
\begin{table}[t]
\centering
\vskip+0.25cm
\begin{tabular}{ c | m{3.7cm} | l | l}
\hline
{\bf Symbol} & {\bf Description} & {\bf Value} & {\bf Units} \T\B \\
\hline \hline
$V_{\text{res}}$  			& Reservoir volume	 	                            & 17.6 & mL \T \\%$17.6 \cdot 10^{\!-\!3}$     & L \T \\
$V_{\text{cell}}$  		    & Half-cell volume 		                            & 0.6985 & mL \T \\%$9.3654 \cdot 10^{\!-\!4}$   & L \T \\
$c_0$  			            & Initial concentration  $\ce{V(acac)3}$            & $0.1$		                & mol/L \T \\
$Q$  			            & Volumetric flow rate 				                & 9.0 & mL/min \T \\%$1.5 \cdot 10^{\!-\!4}$      & L/s \T\\
%$k_{\text{mt}}$  		    & Mass-transfer coefficient                         & $3.3685 \cdot 10^{\!-\!6}$   &  L/s \T \\
$\epsilon$  			    & Porosity of carbon electrode                  & $0.87$		            &  -- \T \\
$E^0_{\text{cell}}$  	    & Equilibrium cell potential		                & $2.2$                     & V \T \B  \\
\hline
\hline
\end{tabular}
\vskip+0.1cm
\caption{{\small Parameters values for RFB experiment}}
\vskip-0.1cm
\label{tconst}
\end{table}
%-----------------------

\begin{figure}[thpb]
\vskip-0.5cm
\hskip-0.25cm
      \begin{tabular}{c}
       \includegraphics[width=9cm,height=6cm]{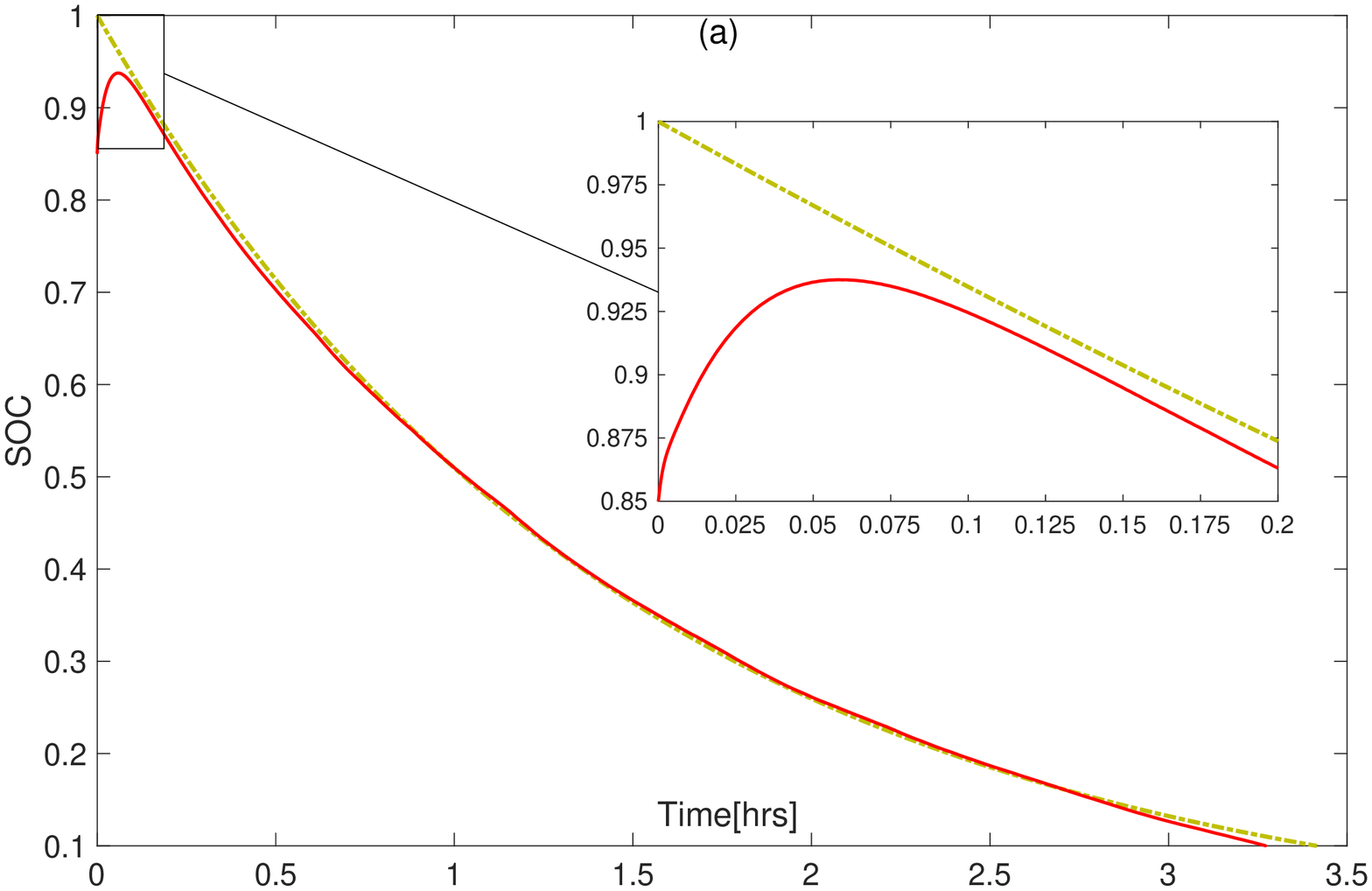} \\[-0.55cm]
       \includegraphics[width=9cm,height=6cm]{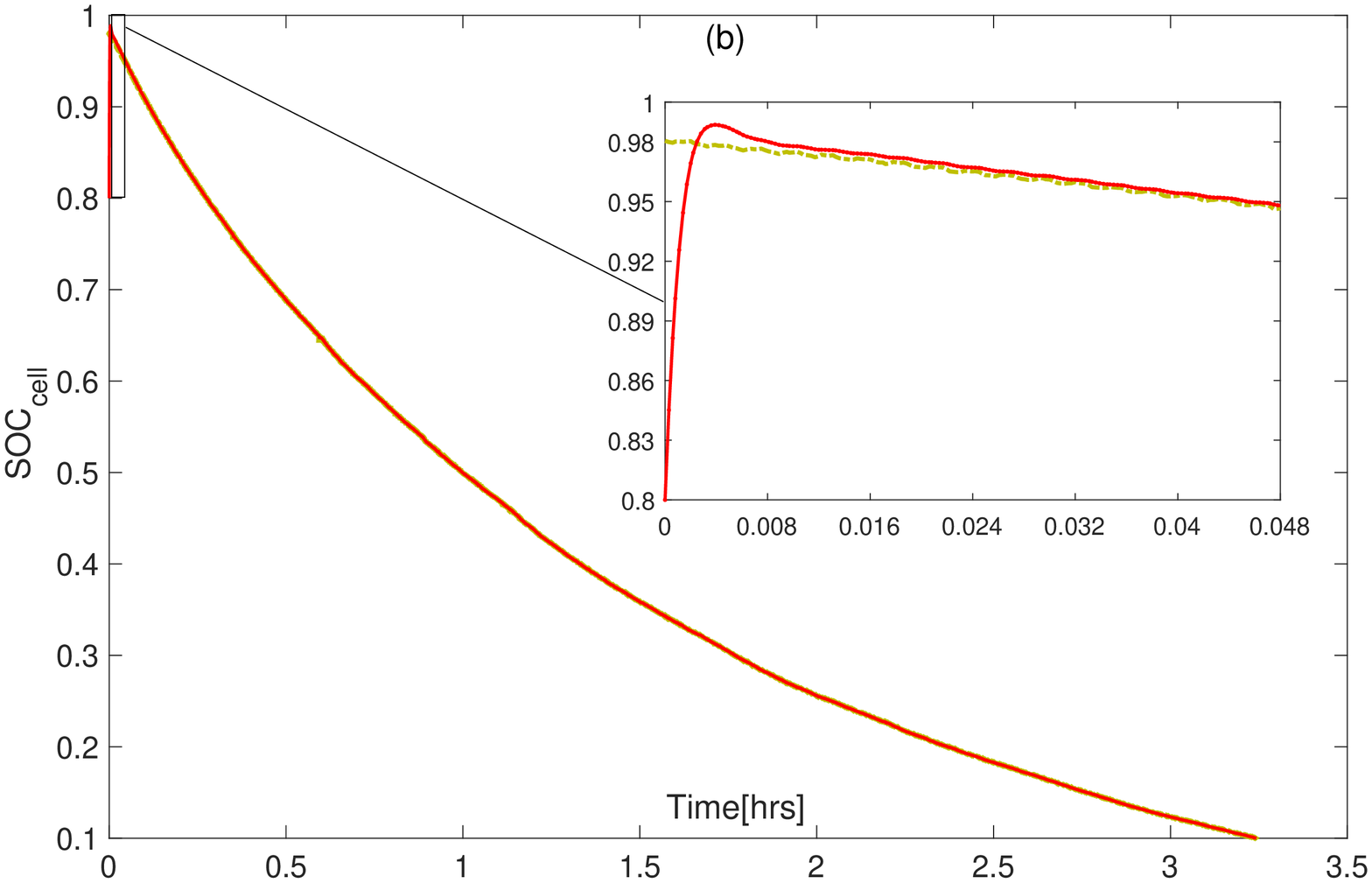}\\[-0.55cm]
       \includegraphics[width=9cm,height=6cm]{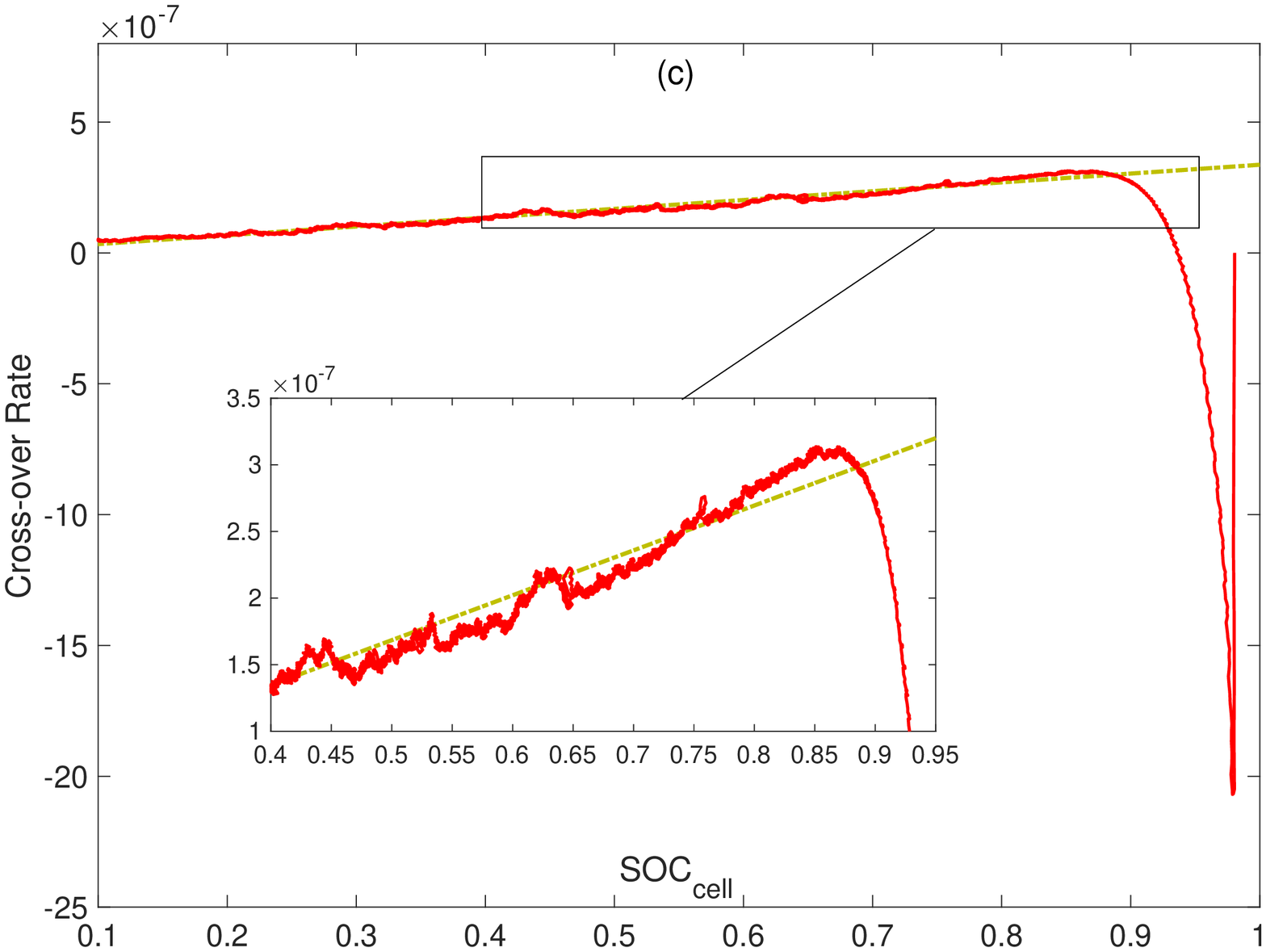}\\[-0.55cm]
       \includegraphics[width=9cm,height=6cm]{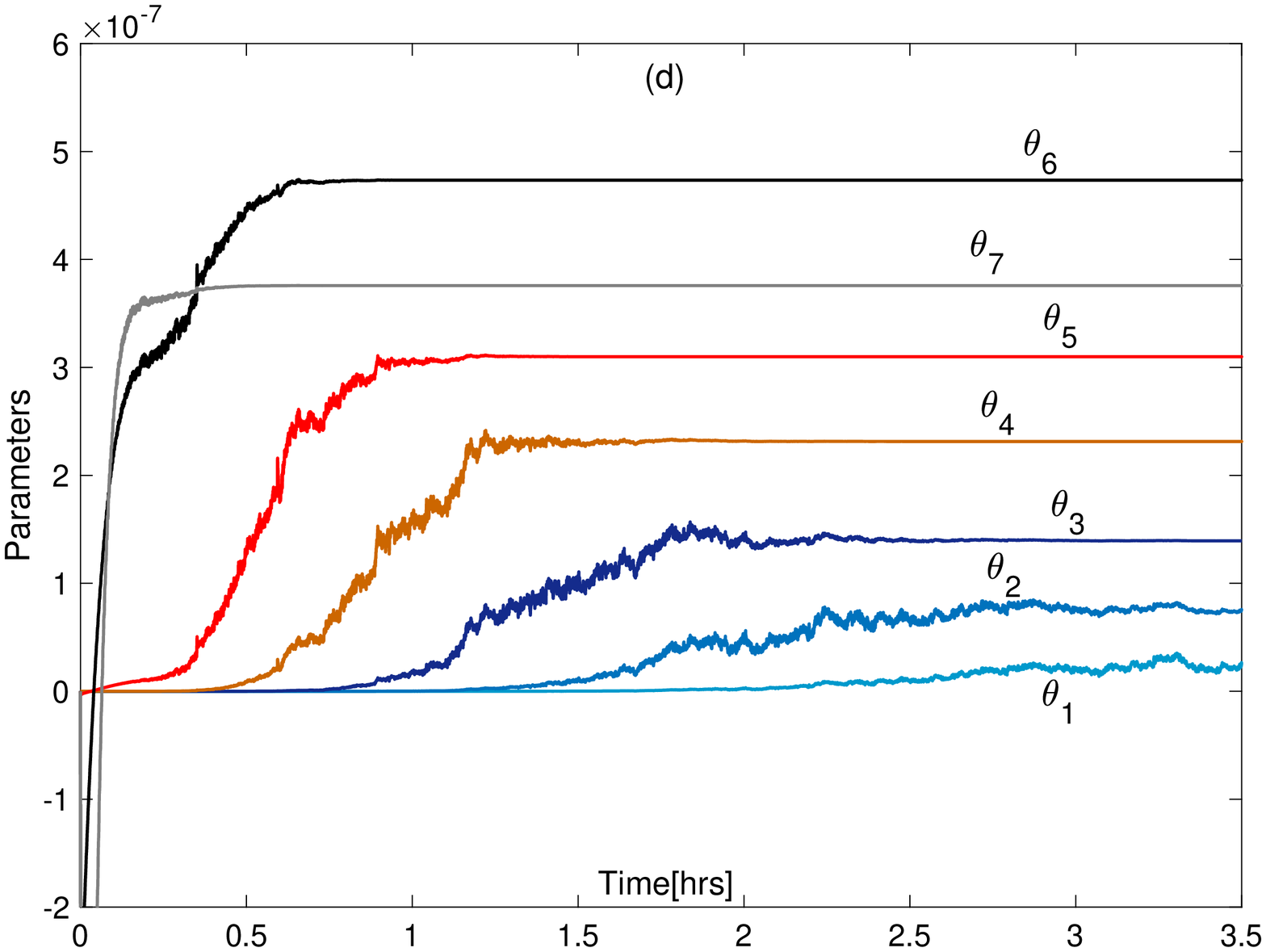}
       \end{tabular}
       \vspace{-0.25cm}
       \caption{\small{Battery model \eqref{model_x}-\eqref{model_y} results with constant parameters in  \eqref{lin_crossover} (dash-dotted lines), alongside adaptive observer estimate (solid lines) for: (a) overall state-of-charge; (b) state-of-charge of the half-cell; (c) crossover flux; (d) estimated parameters of approximation \eqref{Q_app}.}}
       \label{fig_resul01}
\vskip-0.5cm
\end{figure}

\begin{figure}[thpb]
\hskip-0.25cm
      \begin{tabular}{c}
       \includegraphics[width=9cm,height=6cm]{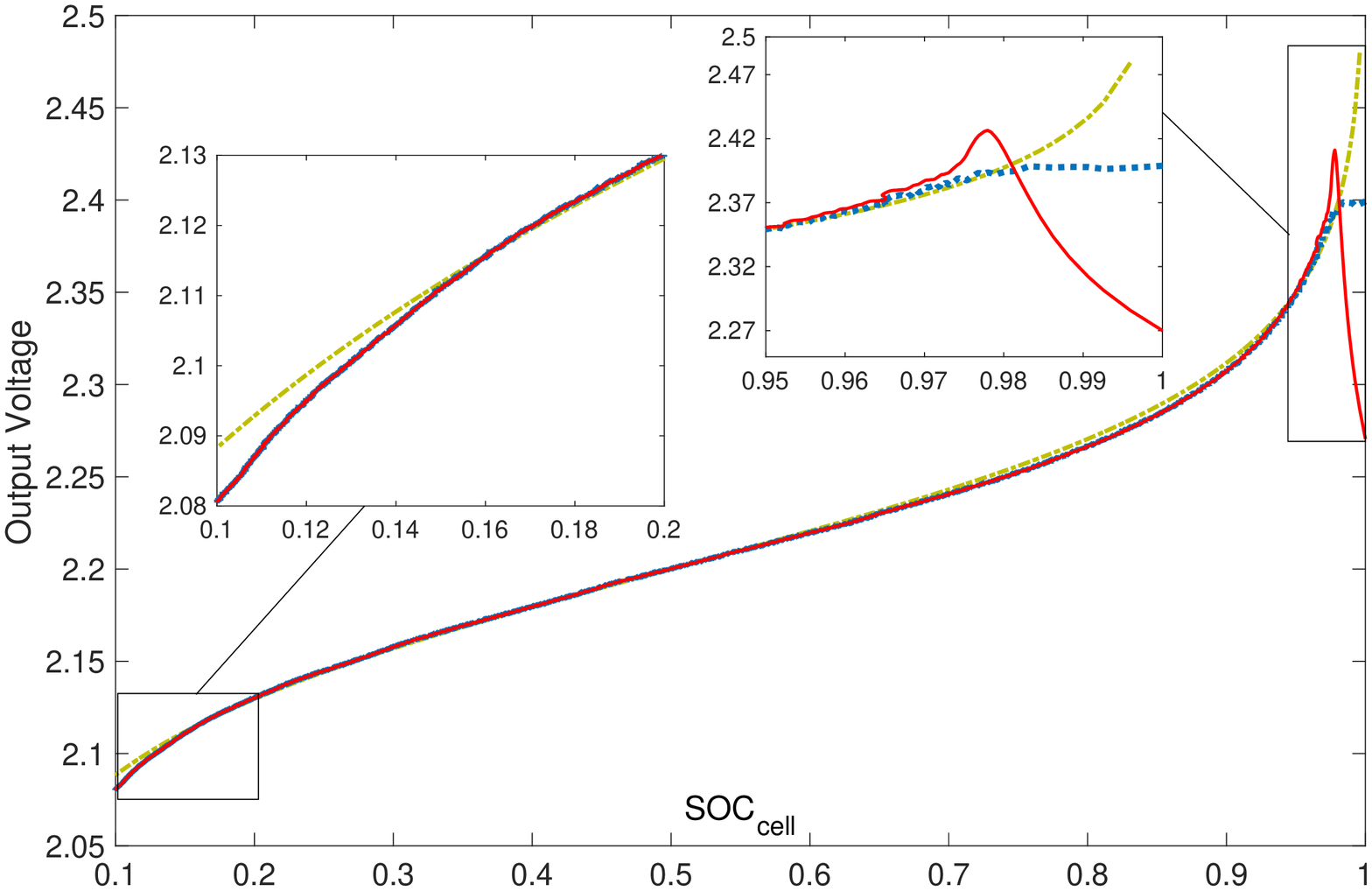}
       \end{tabular}
       \vspace{-0.3cm}
       \caption{\small {Voltages with respect to cell state-of-charge, including: measured voltage response (dotted line); battery model \eqref{model_x}-\eqref{model_y} with constant parameters in \eqref{lin_crossover} (dash-dotted line); and adaptive observer estimate (solid line).
       }}
       \label{fig_resul02}
\vskip-0.15cm
   \end{figure}

%%%%%%%%%%%%%%%%%%%%%%%%%%%%%%%%%%%%%%%%%%%%%%%%%%%%%%%%%%%%%%%%%%%%%%%%%%%%%%%%
\section{Concluding Remarks}
\label{conclu}
In this article an isothermal lumped model of a disproportionation redox flow battery and a related adaptive observer design have been presented. The design of the observer linear feedback gain and parameter adaptation law is based on Lyapunov stability theory and carried out by solving a polytopic LMI-LME problem. This provides a systematic methodology where the UUB convergence of state and parameter estimation errors is guaranteed. The crossover term has been approximated via radial basis functions, enabling continual estimation of the crossover flux as the battery discharges, which can then can be used predictively. A linear relationship between crossover flux and $SOC_{\textrm{cell}}$, captured by mass-transfer coefficient $k_{\textrm{mt}}$, is typically built into simple lumped-parameter RFB models, which assume pseudo-steady diffusion across a planar separator. In the adaptive observer, this relationship is not assumed, yet the observer reveals this linear dependence, as seen in Figure \ref{fig_resul01}. Data from a vanadium acetylacetonate DRFB undergoing self-discharge was analyzed, demonstrating the observer's capability to estimate state-of-charge and crossover simultaneously.

%%%%%%%%%%%%%%%%%%%%%%%%%%%%%%%%%%%%%%%%%%%%%%%%%%%%%%%%%%%%%%%%%%%%%%%%%%%%%%%%

\section{Acknowledgments}
This work was carried out with funding support received from the Faraday Institution ({\tt faraday.ac.uk}; EP/S003053/1), grant number FIRG003.

%%%%%%%%%%%%%%%%%%%%%%%%%%%%%%%%%%%%%%%%%%%%%%%%%%%%%%%%%%%%%%%%%%%%%%%%%%%%%%%%

%% Bibliography
%--------------------------
\bibliographystyle{unsrt}
\bibliography{ASMH_ACC2019_bib}

\end{document}